\newlist{FNenumerate}{enumerate}{1}
\setlist*[FNenumerate]{label=\textbf{Action\ \arabic*}, resume=FN}
\newtheorem{thm}{}[section]
\newtheorem{theorem}[thm]{Theorem}
\newtheorem{lemma}[thm]{Lemma}
\theoremstyle{definition}
\newtheorem{definition}[thm]{Definition}
\newtheorem{example}[thm]{Example}
\numberwithin{equation}{section}
\newcommand{\Nnorm}[1]{{\left\vert\kern-0.25ex\left\vert\kern-0.25ex\left\vert #1\right\vert\kern-0.25ex\right\vert\kern-0.25ex\right\vert}}
\newcommand{\abs}[1]{\left\lvert#1\right\rvert}
\newcommand{\norm}[1]{\left\lVert#1\right\rVert}
\newcommand{\enbrace}[1]{\left\lbrace#1\right\rbrace}
\newcommand{\enpar}[1]{\left(#1\right)}
\newcommand{\Fou}{\ensuremath{\mathcal{F}}}
\newcommand{\Gt}{\ensuremath{\mathcal{G}}}
\newcommand{\Bt}{\ensuremath{\mathcal{B}}}
\newcommand{\Ct}{\ensuremath{\mathcal{C}}}
\newcommand{\Ft}{\ensuremath{\mathcal{F}}}
\newcommand{\FF}{\ensuremath{\mathbb{F}}}
\newcommand{\XX}{\ensuremath{\mathbb{X}}}
\newcommand{\YY}{\ensuremath{\mathbb{Y}}}
\newcommand{\NN}{\ensuremath{\mathbb{N}}}
\newcommand{\DD}{\ensuremath{\mathbb{D}}}
\newcommand{\ZZ}{\ensuremath{\mathbb{Z}}}
\newcommand{\ft}{\ensuremath{\bm{F}}}
\newcommand{\kt}{\ensuremath{\bm{k}}}
\newcommand{\rt}{\ensuremath{\bm{r}}}
\newcommand{\ff}{\ensuremath{\bm{f}}}
\newcommand{\pp}{\ensuremath{\bm{p}}}
\newcommand{\Nt}{\ensuremath{\mathcal{N}}}
\newcommand{\Vt}{\ensuremath{\mathcal{V}}}
\DeclareMathOperator*{\supess}{sup\, es}
\newcommand{\tq}{\colon}
\subjclass[2020]{46-01, 46E30}
\keywords{topological vector spaces, local basis, completeness, convergence in measure, Musielak-Orlicz spaces}
\begin{document}
\title[Characterization of complete topological vector spaces]{A characterization of complete topological vector spaces with applications to spaces of measurable functions}
\author[Ansorena]{Jos\'e L. Ansorena}
\address{Department of Mathematics and Computer Sciences\\
Universidad de La Rioja\\
Logro\~no\\
26004 Spain}
\email{joseluis.ansorena@unirioja.es}
\author[Marcos]{Alejandro Marcos}
\address{Department of Mathematics and Computer Sciences\\
Universidad de La Rioja\\
Logro\~no\\
26004 Spain}
\email{alejandro.marcos@unirioja.es}
\begin{abstract}
The aim of this paper is twofold. Firstly, we give easy-to-handle criteria to determine whether a given family of subsets of a vector space is neighbourhood basis of the origin for a complete vector topology. Then, we apply these criteria to construct quite general complete topological vector spaces of measurable functions. 
\end{abstract}
\thanks{Alejandro Marcos is supported by an Introductory research grant at the University of la Rioja. Jos\'e L. acknowledges the support of the Spanish Ministry for Science and Innovation under Grant PID2022-138342NB-I00 for \emph{Functional Analysis Techniques in Approximation Theory and Applications}.}
\maketitle
\section{Introduction}\noindent
Complete topological vector spaces naturally arise when studying function spaces. Combining notions from linear algebra and topology, these spaces allow us to apply powerful results from different branches of mathematics into a unified framework. For instance, Banach and Fr\'echet spaces are complete topological vector spaces. As relevant as it is, this concept is also fairly elementary in our theory, and the techniques for proving that a given topology is linear and checking its completeness are well-known. So, most experts do not linger on the basic details of checking if a space verifies the necessary conditions to be a complete topological vector space and delve immediately into more complex results. This lapse in the theory may hinder the reader's understanding of the topic at hand. In this pedagogical note, we propose easy-to-handle tools to prove that a given family of sets on a vector space is a neighbourhood basis at the origin for a complete vector topology. We address this task in Section~\ref{sect:comp}. In Section~\ref{sect:examples}, we give some examples exhibiting the applicability of our tools. Specifically, we construct certain relatives of the space of measurable functions on a measure space, and general quasi-Banach-valued Musielak-Orlicz spaces without assuming either local boundedness or local convexity.

Through this article, we use standard techniques and concepts from functional analysis. We refer the reader to \cite{Rudin1991} for the basics of topological vector spaces and function spaces. To fix the terminology, we point out that we will deal with vector spaces over the real or complex field $\FF$ and assume vector topologies to be Hausdorff.

\section{Complete metric vector spaces}\label{sect:comp}\noindent
A topology on a set can be defined through the neighborhoods of its points. This basic result of general topology makes it important to characterize the families of sets that are neighbourhood bases at the points of a set $\XX$ for a topology on $\XX$.  A neat characterization of such families can be found, for instance, in \cite{Willard1970}. As vector topologies are translational invariance, an analogous result in this setting should be stated in terms of neighbourhoods of the origin. Such a characterization of the families of the sets that are neighbourhood bases at the origin for a vector topology is known (see \cite{NariciBeck2011}). Notwithstanding, we record it for completeness.

\begin{theorem}\label{th:localbasis}
Let $\XX$ be a vector space and $\Bt$ a family of subsets of $\XX$. Then, $\Bt$ is a local basis at the origin of some vector topology on $\XX$ if and only if the following properties hold.
\begin{enumerate}[label=(E.\arabic*)]
\item \label{th:localbasis:1}
Given $U, V\in\Bt$, there exists $B\in\Bt$
such that $B\subseteq U\cap V$.
\item \label{th:localbasis:2}
Given $U\in\Bt$, there exists $V\in\Bt$ such that $V+V\subseteq U$.
\item \label{th:localbasis:3}
Given $U\in\Bt$, there exists $V\in\Bt$ such that
$\DD\cdot V \subseteq U$, where $\DD$ denotes the unit disk on the scalar field.
\item \label{th:localbasis:4}
For each nonzero element $x\in\XX$ there exists $U\in\Bt$ such that $x\notin U$.
\item \label{th:localbasis:5}
Given $x\in\XX$ and $U\in\Bt$, there exists $\varepsilon >0$ such that $\lambda x\in U$ whenever $\abs{\lambda}<\varepsilon$.
\end{enumerate}
\end{theorem}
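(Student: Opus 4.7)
The plan is to prove the two implications separately. Necessity of (E.1)--(E.5) is the routine direction: (E.1) reflects that $\Bt$ is a filter base; (E.2) and (E.5) come from the continuity of addition at $(0,0)$ and of scalar multiplication at $(0,x)$, respectively; (E.4) is the Hausdorff separation of the origin from any other point; and (E.3) follows from continuity of scalar multiplication at $(0,0)$, which produces a neighbourhood $W$ of $0$ and $\varepsilon>0$ with $\mu W\subseteq U$ for every $\abs{\mu}<\varepsilon$, so that a suitable $V\in\Bt$ inside $(\varepsilon/2)W$ does the job.

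For sufficiency, I would declare $\tau$ to be the collection of all $O\subseteq\XX$ such that, for every $x\in O$, there is some $U\in\Bt$ with $x+U\subseteq O$. Stability under arbitrary unions is immediate, and stability under binary intersections uses (E.1), so $\tau$ is a topology, and translations are automatically $\tau$-homeomorphisms. Applying (E.5) at $x=0$ shows that $0\in U$ for every $U\in\Bt$, and hence the candidate interior $O_U:=\enbrace{y\in\XX\tq \exists V\in\Bt,\, y+V\subseteq U}$ satisfies $0\in O_U\subseteq U$ and is $\tau$-open by (E.2). Thus $\Bt$ is a neighbourhood base of the origin in $\tau$.

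The main content is to verify that $(\XX,\tau)$ is a topological vector space. Continuity of addition at $(x,y)$ reduces, by translation invariance, to the statement that given $U\in\Bt$ there is $V\in\Bt$ with $V+V\subseteq U$, which is exactly (E.2). Continuity of scalar multiplication is the chief obstacle: at $(\lambda_0,x_0)$, I decompose $\lambda x-\lambda_0 x_0=\lambda(x-x_0)+(\lambda-\lambda_0)x_0$. The second summand is controlled directly by (E.5). For the first summand, $\abs{\lambda}$ can range over $[0,\abs{\lambda_0}+1]$, so the admissible neighbourhood for the differences $x-x_0$ must tolerate a potentially large scaling factor. To arrange this, I iterate (E.2) to produce, for a prescribed $U\in\Bt$, some $V_k\in\Bt$ with $2^k V_k\subseteq U$, and then apply (E.3) to $V_k$ to obtain $V'\in\Bt$ with $\DD V'\subseteq V_k$; choosing $k$ with $2^k\geq\abs{\lambda_0}+1$ ensures $\lambda(x-x_0)\in U$ whenever $\abs{\lambda}\leq 2^k$ and $x-x_0\in V'$.

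Finally, Hausdorffness follows by a standard symmetrization. Given distinct $x,y\in\XX$, set $z=x-y$ and use (E.4) to pick $U\in\Bt$ with $z\notin U$. By (E.2) choose $V\in\Bt$ with $V+V\subseteq U$, and by (E.3) choose $V_0\in\Bt$ with $\DD V_0\subseteq V$; then $V_0-V_0\subseteq V+V\subseteq U$, so $x+O_{V_0}$ and $y+O_{V_0}$ are disjoint $\tau$-open neighbourhoods of $x$ and $y$.
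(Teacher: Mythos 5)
Your proof is correct, and the heart of it---the continuity of scalar multiplication---is in substance the same argument as the paper's: both decompose $\lambda x-\lambda_0x_0$ into a term $(\lambda-\lambda_0)x_0$ handled by (E.5) and a term $\lambda(x-x_0)$ handled by absorbing the bounded scalar factor into an iterated sum of neighbourhoods via (E.2) and then using (E.3) to reduce to the unit disk; your dyadic choice $2^kV_k\subseteq U$ versus the paper's $(N+1)$-fold sum $V_1+\cdots+V_1\subseteq U$ is cosmetic. You diverge in two places. First, you build the topology by hand (declaring $O$ open when every point of $O$ has a $\Bt$-translate inside $O$, and exhibiting the open core $O_U$), whereas the paper outsources this step to Willard's neighbourhood-system theorem; your route is more self-contained and makes visible exactly where (E.1) and (E.2) enter. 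Second, you prove the Hausdorff property directly via the symmetrization $V_0-V_0\subseteq V+V\subseteq U$, while the paper only checks that $\enbrace{0}$ is closed and relies on the standard fact that a $T_1$ topological vector space is Hausdorff; again yours is marginally more explicit. One small bookkeeping point: in the multiplication step you should first split the target $U$ as $W+W\subseteq U$ by (E.2) and aim each of the two summands at $W$; as written you send $\lambda(x-x_0)$ into $U$ itself, which does not combine with the second summand to land the total in $U$. This is a one-line fix and does not affect the validity of the argument.
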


\begin{proof}
Checking a neighbourhood basis of the origin of a topological vector space to satisfy these properties is routine. We shall focus on proving their sufficiency.

We start by showing the existence of a topology on $\XX$ with
\[
\Fou:= (\Bt_x)_{x\in\XX}, \quad \Bt_x=\enbrace{x+U \tq U\in\Bt}
\]
as a family of local bases. Taking into account \cite{Willard1970}*{Theorem 4.5}, we have to check that the following properties are satisfied.
\begin{enumerate}[label=(\roman*)]
\item\label{Willard:hyp:1}
The vector $x$ lies in $V$ for all $V\in\Bt_x$.
\item\label{Willard:hyp:2}
Given $V_1, V_2\in\Bt_x$, then there is $V_3\in\Bt_x$ such that $V_3\subseteq V_1\cap V_2$.
\item\label{Willard:hyp:3}
Given $V\in\Bt_x$, we can find a set $V_0\in\Bt_x$ such that for all $y\in V_0$ there is $W\in\Bt_y$ such that $W\subseteq V$.
\end{enumerate}
By \ref{th:localbasis:5}, $0\in U$ for all $U\in\Bt$ and thus $x\in x+U$ for all for all $x\in\XX$ and $U\in\Bt$ so that \ref{Willard:hyp:1} holds. Additionally, \ref{Willard:hyp:2} follows from the fact that property \ref{th:localbasis:1} is preserved by translations.

To prove \ref{Willard:hyp:3}, pick $V\in\Bt_x$. By definition, there exists $U\in\Bt$ such that $V=x+U$. Therefore, by \ref{th:localbasis:2}, there is $U_0\in\Bt$ such that $U_0+U_0\subseteq U$ holds. Let us now consider the set $V_0=x+U_0$. For any vector $y\in V_0$ we have $y-x\in U_0$, whence $y-x+U_0\subseteq U$. Consequently,
$W:=y+U_0\subseteq V$. Since $W\in\Bt_y$, we are done.

Once we know the existence of a topology for which $\Fou$ is a family of local bases, we show that this topology is a vector topology. To that end, we have to prove that
\begin{enumerate}[label=(\roman*),resume]
\item \label{hyp:vectortopo:1}
The natural operations of the vector space, namely the sum and the product by scalars, are continuous, and
\item \label{hyp:vectortopo:2}
The set $\enbrace{0}$ is a closed set.
\end{enumerate}

We start by proving \ref{hyp:vectortopo:2}. By \ref{th:localbasis:4}, for every nonzero vector $x$ of $\XX$ we can find a set $U\in\Bt$ verifying $-x\notin U$. Thus, $0=x-x\notin x+U$ and therefore $x+U\subseteq \XX\setminus\{0\}$.

Finally, we address the proof of \ref{hyp:vectortopo:1}. Property~\ref{th:localbasis:2} gives the continuity of the sum at $(0,0)$. We infer from the shape of the local bases $\Bt_x$, $x\in\XX$, that the sum operator is continuous at any point $(x,y)\in\XX^2$. Proving that the product operator is continuous requires some additional effort. Fix $\alpha\in\FF$ and $x\in\XX$. It suffices to show that for all $U\in\Bt$ there is a positive scalar $R$ and a set $V\in\Bt$ such that $D(\alpha, R)\cdot(x+V)\subseteq \alpha x +U$.

Let $N$ be a natural number verifying $\abs{\alpha}<N$ and let $V_1\in\Bt$ such that
\[
U_1:= \underbrace{V_1+ \cdots +V_1}_{N+1} \subseteq U.
\]
By \ref{th:localbasis:3}, there is $V\in\Bt$ such that $\DD\cdot V\subseteq V_1$. Use \ref{th:localbasis:5} to pick
$\varepsilon>0$ so that $\lambda x\in V_1$ whenever $\abs{\lambda}\leq \varepsilon$, set $R=\min\enbrace{\varepsilon,N-\abs{\alpha}}$ and
consider $\mu\in D(\alpha,R)$ and $y\in V$. Since $\abs{\mu-\alpha}\leq R\leq\varepsilon$, $(\mu-\alpha)x\in V_1$. Furthermore,
\[
\abs{\dfrac{\mu}{N}}\leq \dfrac{\abs{\mu-\alpha}+\abs{\alpha}}{N}\leq \dfrac{R+\abs{\alpha}}{N}\leq 1,
\]
whence $\mu y/N\in V_1$. Summing up,
\[
\mu(x+y)
=\alpha x+(\mu-\alpha)x+\mu y
=\alpha x+(\mu-\alpha)x+N\dfrac{\mu}{N}y
\in \alpha x+U_1
\subseteq \alpha x + U. \qedhere
\]
\end{proof}

The most useful linear function spaces in Mathematical Analysis are the complete ones. We recall some terminology related to this notion. 

A \emph{directed set} is a partially ordered set $\Lambda$ such that for every $\lambda_1$, $\lambda_2\in\Lambda$ there is $\lambda\in\Lambda$ with $\lambda>\lambda_j$, $j\in\enbrace{1,2}$. Given directed sets $\Lambda_1$, $\Lambda_2$, a map $\sigma\colon \Lambda_1\to \Lambda_2$ is said to \emph{increase to infinity} if it is increasing and for every $\lambda_2\in\Lambda_2$ there is $\lambda_1\in\Lambda_1$ with $\sigma(\lambda_1)>\lambda_2$. A \emph{net} in a space $\XX$ is a family $(x_\lambda)_{\lambda\in\Lambda}$ in $\XX$ for some directed set $\Lambda$. 

A topological vector space $\XX$ is said to be \emph{complete} if every Cauchy net in $\XX$ converges. If we restrict ourselves to first-countable spaces, completeness depends entirely on Cauchy sequences. In fact, a first-countable topological vector space $\XX$ is complete if and only if every Cauchy sequence has a convergent subsequence.
An \emph{$F$-space} is a first-countable complete topological vector space $\XX$. If this topology is locally convex, we say that $\XX$ is a \emph{Fr\'echet space}.

It is known \cite{KPR1984} that a topological vector space is first-countable if and only if it is metrizable, in which case, the distance can be defined through an $F$-norm. Recall that an $F$-norm on a vector space space $\XX$ is a subadditive map
\[
\norm{\cdot}\colon \XX\to[0,\infty)
\]
such that $\norm{x}>0$ if $x\not=0$, and
\begin{enumerate}[label=(F.\arabic*)]
\item $\sup_{\abs{\lambda}\le 1} \norm{\lambda x}\le\norm{x}$ and $\lim_{\lambda\to 0} \norm{\lambda x}=0$ for all $x\in\XX$.
\end{enumerate}
If we replace subadditivity with the weak assumption that
\begin{enumerate}[label=(F.\arabic*),resume]
\item\label{it:quasiaditive} there is a constant $\kappa$ such that $\norm{x+y}\le \kappa \enpar{\norm{x} +\norm{y}}$ for all $x$, $y\in\XX$,
\end{enumerate}
we call $\norm{\cdot}$ a \emph{quasi-$F$-norm}, and we call \emph{modulus of concavity} of $\norm{\cdot}$ to the optimal constant in \ref{it:quasiaditive}. We
can use Theorem~\ref{th:localbasis} to prove that any quasi-$F$-norm $\norm{\cdot}$ induces on $\XX$ a first-countable vector topology for which
\[
\enbrace{x\in \XX \colon \norm{x}\le \varepsilon}, \quad \varepsilon>0,
\]
is a neighbourhood basis of the origin. This topology coincides with the one associated with the quasi-distance
\[
d_F\colon\XX\times \XX \to [0,\infty), \quad (x,y)\mapsto d_F(x,y):=\norm{x-y},
\]
Besides, if $F$ is an $F$-norm then $d_F$ is a distance.

The completeness of spaces defined from $F$-norms or quasi-$F$-norms is seldom proven in practice. When checked, the authors usually invoke criteria that take advantage of the particular geometry of the quasi-$F$-norm. Suppose, for instance, that $\norm{\cdot}$ is a norm, that is, it is subadditive and
\begin{enumerate}[label=(F.\arabic*), resume]
\item\label{it:hom} $ \norm{\lambda x}=\abs{\lambda} \norm{x}$ for all $x\in\XX$ and $\lambda\in\FF$.
\end{enumerate}
In this particular case, it is known that $(\XX,\norm{\cdot})$ is complete, i.e., a Banach space, if and only if any norm-convergent series converges. This criterion can be generalized to quasi-norms. Recall that a \emph{quasi-norm} is a quasi-$F$-norm that satisfies \ref{it:hom}. Given $0<p\le 1$, a $p$-norm is quasi-norm $\norm{\cdot}$ such that $\norm{\cdot}^p$ is subadditive. A topological vector topology $\XX$ is quasi-normable, that is, there is a quasi-norm on $\XX$ that induces the vector topology, if and only if it is locally bounded, that is, there is a bounded neighbourhood of the origin. On the one hand, any quasi-norm is equivalent to a $p$-norm for some $0<p\le 1$ by the Aoki--Rolewicz theorem \cites{Aoki1942,Rolewicz1957}.  On the other hand, a vector space $\XX$ equipped with a $p$-norm $\norm{\cdot}$ is complete if and only if a series $\sum_{n=1}^\infty x_n$ in $\XX$ converges provided that
\[
\sum_{n=1}^\infty \norm{x_n}^p <\infty.
\]
A small drawback of this criterion may be the need to find the index $p$ with which we apply it. But, above all, we must emphasize that there are spaces that are not locally bounded so they can not be equipped with a quasi-norm. For instance, this is the case of the space of measurable functions over the unit interval $[0,1]$ endowed with the topology of the convergence in measure, the space of holomorphic functions on the unit disc with the topology of uniform convergence of compact sets, and certain Orlicz spaces constructed from a nonconvex function. When dealing with such spaces, in the lack of a general criterion, we have to prove completeness using methods built ad hoc.

In this note, we give a simple characterization of completeness that works for any metrizable vector space. Our criterion for completeness does not depend on the existence of a metric on the vector space but relies on handling suitable neighbourhoods of the origin. Specifically, we will use the following concept.

\begin{definition}
We say that a sequence $\enpar{V_n}_{n=1}^\infty$ of subsets of a vector space is \emph{strongly nested} if
\[
V_{n+1}+V_{n+1}\subseteq V_n, \quad n\in\NN.
\]
\end{definition}

We remark that if $\enpar{V_n}_{n=1}^\infty$ is a strongly nested sequence of neighborhoods of the origin, then, since $0\in V_n$ for all $n\in\NN$, $V_{n+1}\subseteq V_n$ for all $n\in\NN$. Moreover, $\enbrace{0}\subsetneq V_n$ for all $n\in\NN$ provided $\XX$ is not the trivial space. Consequently, $V_{n+1}\subsetneq V_n$ for all $n\in\NN$.

\begin{lemma}\label{lem:SNa}
Let $\XX$ be a complete first-countable topological vector space and $\enpar{V_n}_{n=0}^\infty$ be a strongly nested set constituting a neighbourhood basis of the origin. Assume that $V_0$ is closed.
\begin{enumerate}[label=(\roman*)]
\item\label{it:SNa:1} Let $\enpar{y_n}_{n=1}^\infty$ be a sequence in $\XX$ with $y_1\in V_1$ and $y_{n+1}-y_n\in V_{n+1}$ for all $n\in\NN$. Then, $\enpar{y_n}_{n=1}^\infty$ converges, and $\lim_n y_n\in V_0$.
\item\label{it:SNa:2} Let $\enpar{x_n}_{n=1}^\infty$ be a sequence in $\XX$ with $x_n\in V_n$ for all $n\in\NN$. Then, the series $\sum_{n=1}^\infty x_n$ converges, and its sum belongs to $V_0$.
\end{enumerate}
\end{lemma}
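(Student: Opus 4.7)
The plan is to prove (i) directly and then obtain (ii) as a corollary by applying (i) to the sequence of partial sums. The key combinatorial fact underlying everything is that strong nesting lets us absorb any finite sum of tail terms back into an earlier neighbourhood.

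I would first establish the absorption lemma: for every $n \geq 0$ and every $m > n$,
\[
V_{n+1} + V_{n+2} + \cdots + V_m \subseteq V_n.
\]
I would prove this by induction on $m - n$. The base case $m = n+1$ reduces to $V_{n+1} \subseteq V_n$, which follows because $0 \in V_{n+1}$ (as $V_{n+1}$ is a neighbourhood of the origin by \ref{th:localbasis:5}), so $V_{n+1} \subseteq V_{n+1} + V_{n+1} \subseteq V_n$. For the inductive step, applying the hypothesis at index $n+1$ gives $V_{n+2} + \cdots + V_m \subseteq V_{n+1}$, whence
\[
V_{n+1} + V_{n+2} + \cdots + V_m \subseteq V_{n+1} + V_{n+1} \subseteq V_n.
\]

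With this in hand, I would prove \ref{it:SNa:1} as follows. For $m > n \geq 1$, the telescoping identity $y_m - y_n = \sum_{k=n+1}^{m}(y_k - y_{k-1})$ together with $y_k - y_{k-1} \in V_k$ yields $y_m - y_n \in V_{n+1} + \cdots + V_m \subseteq V_n$. Since $(V_n)_{n=0}^\infty$ is a neighbourhood basis of the origin, this shows $(y_n)_{n=1}^\infty$ is Cauchy; by the completeness of $\XX$ (and the fact that Cauchy sequences converge in first-countable complete vector spaces), $y := \lim_n y_n$ exists. To see that $y \in V_0$, I observe that $y_1 \in V_1$ and, for $m \geq 2$, $y_m - y_1 \in V_2 + \cdots + V_m \subseteq V_1$, whence $y_m \in V_1 + V_1 \subseteq V_0$ for every $m \geq 1$. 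Because $V_0$ is closed, $y \in V_0$.

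Part \ref{it:SNa:2} is then immediate: set $y_n = \sum_{k=1}^{n} x_k$. The hypothesis $x_n \in V_n$ translates to $y_1 = x_1 \in V_1$ and $y_{n+1} - y_n = x_{n+1} \in V_{n+1}$, so part \ref{it:SNa:1} applies and delivers both the convergence of the series and the membership of its sum in $V_0$.

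The only subtle step is the absorption lemma; once that is in place the rest is bookkeeping. The one place to be careful is the final membership claim $y \in V_0$, which is exactly why the hypothesis that $V_0$ be closed is needed — without it, we would only get $y$ in the closure of $V_0$.
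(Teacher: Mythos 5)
Your proposal is correct and follows essentially the same route as the paper: both establish that $y_m-y_n\in V_n$ for $m>n$ (you via a separate absorption lemma $V_{n+1}+\cdots+V_m\subseteq V_n$, the paper via induction applied directly to the differences $y_{n+k}-y_n$), deduce that the sequence is Cauchy, and obtain part (ii) by applying (i) to the partial sums. The only notable difference is the final membership step: the paper passes through $y-y_1\in\overline{V_1}$ and the inclusion $\overline{V_1}+\overline{V_1}\subseteq\overline{V_1+V_1}\subseteq\overline{V_0}=V_0$, whereas you show that every $y_m$ already lies in $V_1+V_1\subseteq V_0$ and then take the limit using the closedness of $V_0$ --- a slightly more elementary finish that avoids the closure-of-a-sum inclusion.
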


\begin{proof}
To prove the \ref{it:SNa:1} we shall show by induction on $k$ that
\begin{enumerate}[label=(A)]
\item\label{it:SN} $y_{n+k}-y_n\in V_n$ for all $n\in\NN$ and all $k\in\ZZ$ nonnegative.
\end{enumerate}
As the origin lies in every neighborhood of the origin, the base case is trivial. Assume that the statement holds for a given $k$. We then have
\[
y_{n+k+1}-y_n=y_{n+k+1}-y_{n+1}+y_{n+1}-y_n\in V_{n+1}+V_{n+1} \subseteq V_n.
\]

We infer from \ref{it:SN} that $\enpar{y_n}_{n=1}^\infty$ is a Cauchy sequence that converges to a vector $y\in\XX$. Furthermore, $y-y_1\in \overline{V_1}$ and, therefore,
\[
y=y-y_1+y_1\in \overline{V_1}+V_1\subseteq \overline{V_1}+\overline{V_1} \subseteq \overline{V_1+V_1}\subseteq \overline{V_0}=V_0.
\]

To prove \ref{it:SNa:2}, just apply \ref{it:SNa:1} to the sequence $\enpar{y_n}_{n=1}^\infty$ given by $y_n=\sum_{j=1}^n x_j$.
\end{proof}

\begin{lemma}\label{lem:SNb}
Let $\XX$ be a first-countable topological vector space and $\Ct$ be a subset of the set of all convergent sequences. Suppose that $(x_0+x_n)_{n=1}^\infty\in\Ct$ for all $x_0\in\XX$ and  $(x_n)_{n=1}^\infty\in\Ct$. Assume that there exists a sequence $\enpar{V_n}_{n=1}^\infty$ of neighborhoods of the origin with the following property:
\begin{itemize}
\item For every sequence $\enpar{x_n}_{n=1}^\infty$ in $\XX$ with $x_n\in V_n$ for all $n\in\NN$, the sequence $\enpar{\sum_{k=1}^n x_k}_{n=1}^\infty$ of partial sums belongs to $\Ct$.
\end{itemize}
Then, $\XX$ is complete, and every convergent sequence has a subsequence belonging to $\Ct$.
\end{lemma}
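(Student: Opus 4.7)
The plan is to reduce completeness (via the first-countable criterion recalled in the introduction) to showing that every Cauchy sequence has a convergent subsequence, and to obtain that subsequence by a telescoping argument that feeds directly into the hypothesis on $\Ct$. The same construction will simultaneously yield the statement that every convergent sequence has a subsequence in $\Ct$, since every convergent sequence is Cauchy.

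The core step is the following extraction. Start with a Cauchy sequence $\enpar{z_n}_{n=1}^\infty$ in $\XX$. Using the Cauchy condition against the neighborhoods $V_n$ of the origin, I would choose indices $n_1<n_2<\cdots$ such that
\[
z_{n_{k+1}}-z_{n_k}\in V_k \quad \text{for all } k\in\NN.
\]
Setting $x_k:=z_{n_{k+1}}-z_{n_k}\in V_k$, the hypothesis on $(V_n)$ says that the partial sums
\[
s_K:=\sum_{k=1}^K x_k = z_{n_{K+1}}-z_{n_1}, \quad K\in\NN,
\]
form a sequence belonging to $\Ct$. Now I apply the stated translation property of $\Ct$ with $x_0=z_{n_1}$ to conclude that $\enpar{z_{n_{K+1}}}_{K=1}^\infty=(z_{n_1}+s_K)_{K=1}^\infty$ also belongs to $\Ct$, and is therefore convergent.

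This already proves the second assertion: any convergent sequence is Cauchy, so the construction above produces the desired subsequence in $\Ct$. For the first assertion, we observe that a Cauchy sequence with a convergent subsequence is itself convergent (a basic fact about Cauchy nets in uniform structures, valid in particular in topological vector spaces), so $(z_n)$ converges. Invoking the first-countable completeness criterion recalled in the text completes the argument.

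I do not anticipate any real obstacle: the only moving part is the choice of the indices $n_k$, and once the telescoping identity $\sum_{k=1}^K x_k=z_{n_{K+1}}-z_{n_1}$ is in place, the translation-closure of $\Ct$ mechanically converts the partial-sum conclusion into convergence of a subsequence of $(z_n)$. The hypothesis is used precisely once, via this telescoping trick.
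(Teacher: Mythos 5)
Your proposal is correct and follows essentially the same route as the paper: extract indices so that the telescoping differences of the Cauchy sequence land in the $V_k$, apply the hypothesis to the partial sums, translate by the first term to place the subsequence in $\Ct$, and conclude via the first-countable completeness criterion. Your indexing ($z_{n_{k+1}}-z_{n_k}\in V_k$) is in fact slightly cleaner than the paper's, and your explicit remark that the second assertion follows because convergent sequences are Cauchy fills in a step the paper leaves implicit.
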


\begin{proof}
Given a Cauchy sequence $(x_k)_{k=1}^\infty$ in $\XX$, we choose an increasing sequence $(k_n)_{n=1}^\infty$ of natural numbers such that $x_k-x_j\in V_{n+1}$ for all $j$, $k\in[k_n,\infty)\cap\ZZ$. By assumption, the sequence of partial-sums of the telescopic series $\sum_{n=1}^\infty x_{k_{n+1}} -x_{k_{n}}$ belongs to $\Ct$ . Hence, the subsequence $(x_{k_n})_{n=1}^\infty$ of  $(x_k)_{k=1}^\infty$ belongs to $\Ct$ and thus $(x_k)_{k=1}^\infty$ converges.
\end{proof}

Combining Lemma~\ref{lem:SNa} with Lemma~\ref{lem:SNb} yields the characterization of complete topological vector spaces that we record below.

\begin{theorem}\label{thm:SN}
Suppose that a strongly nested sequence $\enpar{V_n}_{n=1}^\infty$ in a vector space $\XX$ is a neighborhood basis of the origin for a vector topology. Then, $\XX$ is complete if and only if the series $\sum_{n=1}^\infty x_n$ converges provided $x_n\in V_n$ for all $n\in\NN$.
\end{theorem}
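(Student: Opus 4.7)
The plan is to derive both implications of the theorem by direct appeals to Lemma~\ref{lem:SNa} and Lemma~\ref{lem:SNb}, using the fact that the given strongly nested basis $(V_n)_{n=1}^\infty$ already makes $\XX$ first-countable.

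For the forward direction, I would assume $\XX$ is complete and aim to apply Lemma~\ref{lem:SNa}\ref{it:SNa:2}. That lemma needs a strongly nested sequence indexed from $0$ with $V_0$ closed, while the hypothesis only supplies $(V_n)_{n=1}^\infty$. The natural fix is to extend the sequence by setting $V_0 := \XX$. This $V_0$ is (trivially) closed in $\XX$, clearly contains $V_1+V_1$, so $(V_n)_{n=0}^\infty$ remains strongly nested, and adjoining it does not disturb the neighbourhood basis property at the origin. Then, for any sequence $(x_n)_{n=1}^\infty$ with $x_n\in V_n$ for every $n\in\NN$, Lemma~\ref{lem:SNa}\ref{it:SNa:2} yields convergence of $\sum_{n=1}^\infty x_n$, which is exactly the desired conclusion.

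For the converse, I would assume the series condition and invoke Lemma~\ref{lem:SNb} with $\Ct$ chosen to be the family of \emph{all} convergent sequences in $\XX$. This $\Ct$ is trivially invariant under translations by a fixed vector $x_0$, so the ambient hypothesis of Lemma~\ref{lem:SNb} is met. The hypothesis of the theorem says precisely that whenever $x_n\in V_n$ the partial sums of $\sum_{k=1}^\infty x_k$ form a convergent sequence, which is exactly the summability condition required by Lemma~\ref{lem:SNb}. Since $\XX$ is first-countable (the countable family $(V_n)_{n=1}^\infty$ is a neighbourhood basis of the origin), the lemma applies and gives completeness of $\XX$.

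There is no substantive obstacle beyond the small bookkeeping step of prepending $V_0:=\XX$ in the forward direction so that Lemma~\ref{lem:SNa} can be invoked; the theorem is essentially the clean concatenation of the two preceding lemmas.
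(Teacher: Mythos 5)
Your proposal is correct and follows exactly the route the paper intends: the paper's proof of this theorem is literally the one-line remark that it follows by combining Lemma~\ref{lem:SNa} with Lemma~\ref{lem:SNb}, and you have supplied the only two details that need checking (prepending $V_0:=\XX$, which is closed and preserves both the strongly nested property and the neighbourhood-basis property, so that Lemma~\ref{lem:SNa}\ref{it:SNa:2} applies; and taking $\Ct$ to be all convergent sequences in Lemma~\ref{lem:SNb}). No gaps.
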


\section{Applications}\label{sect:examples}\noindent
We start this section by showing how Theorem~\ref{th:localbasis} and Theorem~\ref{thm:SN} can be used to prove that the space of measurable functions endowed with the topology of the convergence in measure is an $F$-space. For broader applicability, we will conduct the study within the framework of quasi-Banach-valued functions. Given a measure space $(\Omega, \Sigma, \mu)$ and a quasi-Banach space $\XX$ equipped with the Borel $\sigma$-algebra, a function $f\colon\Omega\to\XX$ is said to be \emph{strongly measurable} if it is measurable and there is a separable subspace $\YY$ of $\XX$ such that $f(\omega)\in\YY$ $\mu$-a.e.\@ $\omega\in\Omega$. It is known \cite{DiestelUhl1977} that $f$ is strongly measurable if and only if there is a sequence $(s_n)_{n=1}^\infty$ of simple functions such that $\lim_n s_n(\omega)=f(\omega)$ $\mu$-a.e.\@ $\omega\in\Omega$, in which case we can choose $(s_n)_{n=1}^\infty$ so that the sequence $(\norm{s_n})_{n=1}^\infty$ of scalar-valued functions is nondecreasing. We denote by \[
L(\mu,\XX)
\]
the vector space of all strongly measurable functions from $\Omega$ to $\XX$, identifying, as customary, functions that only differ on a null set.

We denote by $\Sigma(\mu)$ the set of all measurable sets with finite measure.

Given $E\in\Sigma$, we denote by $(E,\Sigma|_E,\mu|_E)$ the measure space obtained by restricting $(\Omega,\Sigma,\mu)$ to $E$.

Let $p\in(0,1]$. A subset $V$ of a vector space is said to be \emph{$p$-convex} if $su+tv\in V$ for all $u$, $v\in V$ and all $s$, $t\in \FF$ with $\abs{s}^p+\abs{t}^p \le 1$. A topological vector space is said to be \emph{locally $p$-convex} if there is a neighbourhood basis at the origin consisting of $p$-convex sets. Any locally bounded topological vector space $\XX$ is locally $p$-convex for some $p$. In fact, $\XX$ is locally $p$-convex if and only if $\XX$ can be equipped with a $p$-norm. Despite quasi-norms may not be continuous maps relative to the topology they induce \cite{Hyers1939}, $p$-norms are. Hence, any quasi-Banach space can be endowed with a continuous quasi-norm.

\begin{example}
Let $(\Omega,\Sigma,\mu)$ be measure space and $\XX$ be a quasi-Banach space endowed with a continuous quasi-norm and the Borel $\sigma$-algebra.
Given $f\in L(\mu,\XX)$ and $t\in(0,\infty)$, we consider the set
\begin{equation}\label{eq:levelset}
\Omega_{f,t}=\enbrace{\omega\in\Omega \colon \norm{f(\omega)}> t}.
\end{equation}
Clearly,
\begin{equation}\label{eq:DF2}
\Omega_{\lambda f,t} = \Omega_{f,t/\abs{\lambda}}, \quad f\in L(\mu,\XX), \, t>0,\, \lambda\in\FF\setminus\{0\}.
\end{equation}
If $\kappa$ is the modulus of concavity of $\XX$, then
\begin{equation}\label{eq:DF1}
\Omega_{f+g,t}\subseteq \Omega_{f,t/(2\kappa)}+\Omega_{g,t/(2\kappa)}, \quad f,\, g\in L(\mu,\XX), \, t\in(0,\infty).
\end{equation}
Now, define
\begin{equation}\label{eq:NeiL0}
    V_{E,\delta,t}=\enbrace{f\in L(\mu,\XX) \tq \mu(E\cap\Omega_{f,t}) < \delta}, \, E\in\Sigma, \, \delta, \, t\in(0,\infty).
\end{equation}
If two sets $A$, $E\in\Sigma$ satisfy $\mu(A\setminus E)\le\varepsilon$ then
the mere definition gives
\begin{equation}\label{eq:AlmostInclusion}
V_{E,\delta,t} \subseteq V_{A,\delta+\varepsilon,t}, \quad \delta,t,\varepsilon\in(0,\infty).
\end{equation}
By \eqref{eq:DF2},
\begin{equation*}
\lambda \cdot V_{E,\delta,t}= V_{E,\delta,\abs{\lambda} t} , \quad E\in\Sigma,\, t>0, \, \lambda\in\FF\setminus\{0\}.
\end{equation*}
In turn, by \eqref{eq:DF1},
\begin{equation}\label{eq:DF4}
    V_{E,\delta/2, t/(2\kappa)}+V_{E,\delta/2, t/(2\kappa)} \subseteq V_{E,\delta,t}, \quad E\in\Sigma,\, \delta, \, t\in(0,\infty).
\end{equation}
Since $\Omega_{f,t}$ decreases to $\emptyset$ as $t$ increases,
\[
    V_{E,t_1,\delta_1} \subseteq V_{E,t,\delta}, \quad E\in\Sigma, \, 0<t_1\le t<\infty,\, 0<\delta_1\le\delta<\infty,
\]
and furthermore,
\begin{align*}
    \bigcup_{t>0} V_{E,\delta,t}=&L(\mu,E,\XX)\\
    :=&\enbrace{f\in L(\mu,\XX) \colon \mu(E\cap\Omega_{f,t})< \infty \mbox{ for some } t>0 }
\end{align*}
for all $E\in\Sigma$ and $\delta>0$. By \eqref{eq:DF4}, $L(\mu,E,\XX)$ is a vector space.

When studying these kinds of spaces, it will be useful to consider certain families of sets. We say that a family $\Ft\subseteq\Sigma$  is \emph{directed to} $\Omega_0\in\Sigma$  if 
\begin{itemize}
    \item for every $A$, $B\in\Ft$ there is $D\in\Ft$ with $A\cup B\subseteq D$;
    \item $\mu(A\setminus \Omega_0)=0$ for all $A\in\Ft$; and 
    \item there is a countable family $\Gt\subseteq\Ft$ such that $\mu(\Omega_0\setminus\cup_{A\in\Gt} A)=0$.
\end{itemize}
Since $\Omega_{f,t}$ increases to $\enbrace{\omega\in\Omega \colon f(\omega)\not=0}$ as $t$ decreases to $0$,
\[
\bigcap_{\substack{t,\delta>0\\ E\in\Ft}} V_{E,\delta,t}=\enbrace{f\in L(\mu,\XX) \colon f(\omega)=0 \, \mu\mbox{-a.e.\@ } \omega\in\Omega_0}
\]
as long as $\Ft$  is \emph{directed to} $\Omega_0$.  We will infer from Theorem~\ref{th:localbasis} that if $\Ft$ is directed to $\Omega$ then
\begin{equation*}
\Vt:=\enbrace{V_{E,\delta,t} \tq E\in\Ft,\,\delta,t\in(0,\infty)}
\end{equation*}
is a local basis at the origin for a vector topology on
\[
    L(\mu,\Ft,\XX):=\bigcap_{E\in\Ft} L(\mu,E,\XX).
\]
Most of the conditions required by the theorem are easy to prove, only $\Vt$ satisfying\ref{th:localbasis:5} is in doubt. Given $E\in\Ft$, $\delta$, $t\in(0,\infty)$ and $f\in L(\mu,\Ft,\XX)$, there is $s\in(0,\infty)$ such that $f\in V_{E,\delta,s}$. Now, set
$\varepsilon=t/s$. If $\abs{\lambda}\le \varepsilon$, then $\lambda f\in V_{E,\delta,\abs{\lambda} s} \subseteq V_{E,\delta,t}$.

We denote by $L_0(\mu,\Ft,\XX)$ the above constructed topological vector space over $L(\mu,\Ft,\XX)$. Note that the convergence in $L_0(\mu,\Ft,\XX)$ is the convergence in measure on sets of $\Ft$.
The most important space constructed this way corresponds to the case when $\Ft=\{\Omega\}$. This is the space $L_0(\mu,\XX)$ associated with convergence in measure. If $\Sigma(\mu)$ is directed to $\Omega$, that is, $\mu$ is $\sigma$-finite, we must also be aware of the usefulness of the space 
\[
    L_{0,f}(\mu,\XX):=L_0(\mu,\Sigma(\mu),\XX)
\]
associated with the convergence in measure on finite-measure sets.
While $L_0(\mu,\XX)$ is a topological vector space over  
\[
    L(\mu,\Omega,\XX)=\enbrace{f\in L(\mu,\XX) \colon \mu(\Omega_{f,t})< \infty \mbox{ for some } t>0 },
\]
$L_{0,f}(\mu,\XX)$ is a topological vector space over $L(\mu,\XX)$. These two spaces coincide when $\mu$ is a finite measure. In contrast, if $\mu$ is $\sigma$-finite but infinite there exists $f\in L(\mu,\XX) \setminus L(\mu,\Omega,\XX)$. Indeed, we can choose
\[
    f=\sum_{n=1}^\infty \lambda_n \chi_{A_n},
\]
where $(\lambda_n)_{n=1}^\infty$ is an unbounded sequence in $\XX$ and $(A_n)_{n=1}^\infty$ is partition of $\Omega$ into finite-measure sets.

In our study, it will be useful to consider the following relationship between families of sets. Given $\Ft_1$ and $\Ft_2$  subsets of $\Sigma$, we say that $\Ft_1\prec \Ft_2$ if for all $A\in\Ft_1$ and $\varepsilon>0$ there is $B\in\Ft_2$ such that $\mu(A\setminus B)\le  \varepsilon$. This definition immediately gives that if $\Ft$ is directed to $\Omega$ then
\[
    \Sigma(\mu) \prec \Ft \prec \enbrace{\Omega}.
\]
Some elementary properties of the spaces $L_0(\mu,\Ft,\XX)$ of measurable functions are recorded below.

\begin{lemma}\label{lem:spacerelation}
    Let $(\Omega,\Sigma,\mu)$ be a measure space. Let $\Ft$ and $\Ft_0$ be subsets of $\Sigma$ directed to $\Omega$.
    \begin{enumerate}[label=(\roman*)]
        \item\label{it:newa}  $L_0(\mu,\Ft,\XX) \subseteq L_0(\mu,\Ft_0,\XX)$ continuously if and only if $\Ft_0\prec \Ft$.
    
        \item\label{it:newc} $L_0(\mu,\Ft_0,\XX) = L_0(\mu,\Ft,\XX)$ with the same topology if and only if $\Ft_0\prec \Ft\prec \Ft_0$. 
    
        \item\label{it:newb} $L_0(\mu,\Ft,\XX)$ is first-countable, that is, metrizable, if and only if there is a non-decreasing sequence $(A_n)_{n=1}^\infty$ in $\Ft$
        such that 
        \[
            \Ft\prec \Ft_0:=\enbrace{A_n \colon n\in\NN}.
        \]
        If this is the case,   $\Ft_0$ is directed to $\Omega$,  and, given $(\varepsilon_n)_{n=1}^\infty$ and $(t_n)_{n=1}^\infty$ in $(0,\infty)$ with $\lim_n \varepsilon_n=\lim_n t_n=0$, 
        \[
            \enbrace{ V_{A_n,\varepsilon_n, t_n} \colon n\in\NN}
        \] 
        is a countable neighbourhood basis of the origin of $L_0(\mu,\Ft,\XX) = L_0(\mu,\Ft_1,\XX)$.
    
        \item\label{it:newd} Given $E\in\Sigma$, the mapping $f\mapsto f|_E$ defines a linear bounded projection from $L_0(\mu,\Ft,\XX)$ onto $L_0(\mu|_E, \Ft|_E,\XX)$, where 
        \[
            \Ft|_E=\{A\cap E \colon A\in\Ft\}. 
        \]
    \end{enumerate}
\end{lemma}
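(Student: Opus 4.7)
The plan is to rely on three elementary monotonicity properties of the basic neighbourhoods $V_{E,\delta,t}$, namely that they increase with $\delta$ and with $t$ but decrease with $E$, together with the quantitative almost-inclusion \eqref{eq:AlmostInclusion}. With those in hand, the four items reduce to mostly mechanical arguments, the main difficulty being the ``only if'' direction of \ref{it:newb}.

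For \ref{it:newa}, the ``if'' direction is an immediate consequence of \eqref{eq:AlmostInclusion}: given a basic neighbourhood $V_{E_0,\delta,t}$ with $E_0\in\Ft_0$, pick $E\in\Ft$ with $\mu(E_0\setminus E)\le\delta/2$ to obtain $V_{E,\delta/2,t}\subseteq V_{E_0,\delta,t}$. The identity $L(\mu,E_0,\XX)=\bigcup_{s>0}V_{E_0,\delta,s}$ also delivers the set-theoretic inclusion $L(\mu,\Ft,\XX)\subseteq L(\mu,\Ft_0,\XX)$. For ``only if'', fix $E_0\in\Ft_0$ and $\varepsilon>0$, pick any $t>0$, and use continuity to produce $E\in\Ft$ and $\delta',t'>0$ with $V_{E,\delta',t'}\subseteq V_{E_0,\varepsilon,t}$. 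Then I would choose $x_0\in\XX$ with $\norm{x_0}>\max\enbrace{t,t'}$ and test with $f=x_0\chi_{E_0\setminus E}$: this function lies in $L(\mu,\Ft,\XX)$ because $\Omega_{f,s}=\emptyset$ for $s\ge\norm{x_0}$, and a direct computation shows $f\in V_{E,\delta',t'}$ while $\mu(E_0\cap\Omega_{f,t})=\mu(E_0\setminus E)$, forcing $\mu(E_0\setminus E)\le\varepsilon$. Item \ref{it:newc} is then \ref{it:newa} applied in both directions.

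For the ``if'' part of \ref{it:newb}, given $V_{E,\delta,t}$ with $E\in\Ft$, I would first apply $\Ft\prec\enbrace{A_n \colon n\in\NN}$ to pick $m_1$ with $\mu(E\setminus A_{m_1})\le\delta/2$, so that $V_{A_{m_1},\delta/2,t}\subseteq V_{E,\delta,t}$; then pick $n\ge m_1$ with $\varepsilon_n\le\delta/2$ and $t_n\le t$, and the three monotonicities collapse the chain $V_{A_n,\varepsilon_n,t_n}\subseteq V_{A_n,\delta/2,t}\subseteq V_{A_{m_1},\delta/2,t}\subseteq V_{E,\delta,t}$. For ``only if'', I would start from any countable neighbourhood basis at the origin, refine it to sets of the form $V_{E_n,\delta_n,t_n}$ with $E_n\in\Ft$ (possible because $\Vt$ itself is a basis), and use directedness of $\Ft$ to produce a nondecreasing sequence $A_n\in\Ft$ with $E_1\cup\cdots\cup E_n\subseteq A_n$. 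The same test-function argument as in \ref{it:newa} shows $\mu(E\setminus E_n)\le\varepsilon$ whenever $V_{E_n,\delta_n,t_n}\subseteq V_{E,\varepsilon,1}$, which yields $\Ft\prec\enbrace{E_n \colon n\in\NN}$, hence $\Ft\prec\enbrace{A_n \colon n\in\NN}$. That $\enbrace{A_n \colon n\in\NN}$ is directed to $\Omega$ then follows by taking a countable $\Gt\subseteq\Ft$ witnessing the directedness of $\Ft$ and approximating each $B\in\Gt$ arbitrarily well by some $A_n$.

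Finally, for \ref{it:newd}, strong measurability survives restriction, and the inequality $\mu|_E\enpar{(A\cap E)\cap\Omega_{f|_E,t}}\le \mu\enpar{A\cap\Omega_{f,t}}$ shows simultaneously that $f\mapsto f|_E$ maps $L_0(\mu,\Ft,\XX)$ into $L_0(\mu|_E,\Ft|_E,\XX)$ and that $V_{A,\delta,t}$ maps into $V_{A\cap E,\delta,t}$, so the map is linear and continuous. Extension by zero on $\Omega\setminus E$ provides a continuous right inverse, giving surjectivity and letting one interpret the map as a projection. The hard part will be the test-function construction supporting ``only if'' in \ref{it:newa} and \ref{it:newb}: the function must expose $\mu(E_0\setminus E)$ from outside a basic neighbourhood of the origin, and one must check that it really belongs to $L(\mu,\Ft,\XX)$ despite $E_0\setminus E$ possibly having infinite measure. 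Choosing the threshold $s\ge\norm{x_0}$ so that $\Omega_{f,s}=\emptyset$ resolves this subtlety.
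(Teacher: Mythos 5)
Your proposal is correct and follows essentially the same route as the paper: the inclusion \eqref{eq:AlmostInclusion} for the ``if'' directions, the test function $x\chi_{A\setminus B}$ (with $\norm{x}$ large enough that it lies in $L(\mu,\Ft,\XX)$ and exposes $\mu(A\setminus B)$) for the ``only if'' of \ref{it:newa}, directedness of $\Ft$ to extract the nondecreasing sequence $(A_n)_{n=1}^\infty$ in \ref{it:newb}, and restriction/extension by zero for \ref{it:newd}. You supply rather more detail than the paper, which leaves most of \ref{it:newb} and all of \ref{it:newd} to the reader, but the underlying argument is identical.
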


\begin{proof}
The `if' part of \ref{it:newa} follows from \eqref{eq:AlmostInclusion}. To prove the `only if' part, we pick $A\in\Ft_0$ and $\varepsilon\in(0,\infty)$. Let $B\in\Ft$ and $\delta$, $t\in(0,\infty)$ be  such that $V_{B,\delta,t} \subseteq V_{A,\varepsilon,1}$ and pick $x\in\XX$ with $\norm{x}\ge 1$. Since $f:=x\chi_{A\setminus B}\in V_{B,\delta,t}$, 
\[
\mu(A\setminus B)=\mu\enpar{ \enbrace{\omega\in A  \colon \norm{f(\omega)} >1 }}<\varepsilon.
\]

Condition \ref{it:newc} is a direct consequence of \ref{it:newa}. To prove \ref{it:newb}, assume that the space $L_0(\mu,\Ft,\XX)$ is first-countable. In that case, there exists a non-decreasing sequence $(A_n)_{n=1}^\infty\subseteq \Ft$ such that $L_0(\mu,\Ft,\XX)$ has a neighbourhood basis of the origin consisting of sets $V_{A_n,\delta,t}$ with $n\in\NN$.
Hence, if we denote $\Ft_0=\enbrace{A_n \tq n\in\NN}$, $L_0(\mu,\Ft_0,\XX)\subseteq L_0(\mu,\Ft,\XX)$ continuously and consequently $\Ft\prec\Ft_0$. The remaining requirements follow readily. Finally, \ref{it:newd} is easy to prove.
\end{proof}

We next show that $L_0(\mu,\Ft,\XX)$ is complete. To begin with, consider the case when $\Ft$ is a singleton, that is $L_0(\mu,\Ft,\XX)=L_0(\mu,\XX)$.  We know from Lemma~\ref{lem:spacerelation} that this space is first-countable so, aiming to apply Lemma~\ref{lem:SNb}, we choose a sequence $(\varepsilon_n)_{n=1}^\infty$ in $(0,\infty)$ with $\sum_{n=1}^\infty \varepsilon_n<\infty$, and $t_n=(2\kappa)^{-n}$ for all $n\in\NN$. Set
\[
\delta_n=\sum_{j=n+1}^\infty \varepsilon_n, \quad\, W_n=\enbrace{x\in\XX \colon \norm{x} \le t_n}, \quad n\in\NN.
\]
Note that $W_{n+1}+W_{n+1}\subseteq W_n$ for all $n\in\NN$, so $(W_n)_{n=1}^\infty$ is a strongly nested local basis of of $\XX$. Now, let $(f_n)_{n=1}^\infty$ in $L(\mu,\XX)$ be such that $f_n\in V_{\Omega,\varepsilon_n, t_n}$ for all $n\in\NN$. We will show that $(V_{\Omega,\varepsilon_n, t_n})_{n=1}^\infty$ is the sequence of neighborhoods required by Lemma~\ref{lem:SNb}.
Set
\[
A_j= \cup_{n=j+1}^\infty \Omega_{f_n, t_n },\, \Omega_j=\Omega\setminus A_j, \quad j\in \NN.
\]
We have $\mu(A_j) < \delta_j$ for all $j\in \NN$. Consequently,
\[
A=\bigcap_{j=1}^\infty A_j.
\]
is a null set. Note that $\Omega\setminus A=\cup_{j=1} \Omega_j$ and that $\Omega_j$ consists of all $\omega\in\Omega$ such that $f_n(\omega)\in W_n$ provided that $n\ge j+1$. Applying Lemma~\ref{lem:SNa} in the quasi-Banach space $\XX$, the series $\sum_{n=1}^\infty f_n(\omega)$ converges for all $\omega\in \Omega\setminus A$, and $\sum_{n=j+1}^\infty f_n(\omega) \in W_{j}$ provided that $\omega\in \Omega_j$. Define the function
\begin{align*}
    f:\Omega&\to\XX \\
    \omega&\mapsto 
    \begin{cases}
        \sum_{n=1}^\infty f_n(\omega) & \text{if $\omega\in\Omega\setminus A$}\\
        0 & \text{otherwise}
    \end{cases}
\end{align*}
and set
$g_j=f-\sum_{n=1}^j f_j$ for all $j\in\NN$. We have
\[
    \Omega_{g_j,t_j} \subseteq A_j, \quad j\in\NN.
\]
Therefore,
\[
g_j\in V_{\Omega,\delta_j, t_j } \subseteq V_{\Omega,\delta_k, t_k } , \quad j,k\in \NN, \, k\le j.
\]
Since $(V_{\Omega,\varepsilon_k, t_k})_{k=1}^\infty$ is a neighbourhood basis at the origin, $\lim_j g_j=0$ and the space is complete.

In general, we pick a non-decreasing sequence $(A_n)_{n=1}^\infty$ in $\Ft$ with $\mu\enpar{\Omega\setminus\cup_{n=1}^\infty A_n}=0$. Let $(f_\lambda)_{\lambda\in \Lambda}$ be a Cauchy net in $\YY:=L_0(\mu,\Ft,\XX)$. Let $E\in\Ft$. Since the mapping $f\mapsto f|_{E}$ defines a continuous operator from $\YY$ into $L_0(\mu|_{E},\XX)$, there is  a strongly measurable function $f_E\colon E\to \XX$ such that $\lim_\lambda f_\lambda|_E=f_E$ in $L_0(\mu|_E,\XX)$. By the uniqueness of the limit in $L_0(\mu|_{A_n},\XX)$, $n\in\NN$, there is $f\in L_0(\mu,\XX)$ such that $f(\omega)=f_{A_n}(\omega)$ $\mu$-a.e.\@ $\omega\in A_n$ for all $n\in\NN$. Now, by the uniqueness of the limit in $L_0(\mu|_{E\cap A_n},\XX)$, $n\in\NN$, $f_E=f$ $\mu$-a.e.\@ $\omega\in E$ for all $E\in\Ft$. We infer that $\lim_\lambda f_\lambda=f$ in $\YY$. 

\medskip 

By Egoroff's Theorem, if a sequence in $L(\mu,\XX)$ converges to $f\in L(\mu,\XX)$ $\mu$-a.e., then it also converges to $f$ in $L_{0,f}(\mu,\XX)$. Let us show that a partial converse holds as long as $\mu$ is $\sigma$-finite. In our application of Lemma~\ref{lem:SNb}, its hypotheses are satisfied with $\Ct$ the set of all sequences converging both in measure and almost everywhere. We infer that  any sequence converging in $L_0(\mu,\XX)$ has a subsequence converging $\mu$-a.e.\@ $\omega\in\Omega$. A standard application of the diagonal Cantor method implies this result to be true for $L_0(\mu,\Ft,\XX)$ where $\Ft$ is an arbitrary set directed to $\Omega$. In particular, if $\mu$ is $\sigma$-finite and a sequence $(f_n)_{n=1}^\infty$ converges to $f$ in $L_{0,f}(\mu,\XX)$ then a subsequence of $(f_n)_{n=1}^\infty$ converges to $f$ $\mu$-a.e. 

\medskip
We close this example by noticing that if $\mu$ is $\sigma$-finite and  $(A_n)_{n=1}^\infty$ is a nondecreasing sequence in $\Sigma(\mu)$ with  $\cup_{n=1}^\infty A_n=\Omega$, then
\[
\Sigma(\mu)\prec \Ft_0:=\enbrace{A_n \colon n\in\NN}.
\]
Consequently, $L_{0,f}(\mu,\XX)=L_0(\mu,\Ft_0,\XX)$. As a result, $L_{0,f}(\mu,\XX)$ is metrizable. In contrast, for any infinite measure $\mu$ there is $\Ft$ directed to $\Omega$ such that $L_0(\mu,\Ft,\XX)$ is not metrizable. Pick a partition $(\Omega_n)_{n=1}^\infty$ of $\Omega$ such that $\mu(\Omega_n)=\infty$ for all $n\in\NN$. 
Let $\Ft$ be the set of all $A\in\Sigma$ for which there is $k\in\NN$ such that $\mu(A\setminus \Omega_n)<\infty$ whenever $n>k$. Assume by contradiction that $L_0(\mu,\Ft,\XX)$ is metrizable. Then, there is a nondecreasing sequence $(B_j)_{j=1}^\infty$ in $\Ft$ such that 
\[
\Ft\prec \enbrace{B_j \colon j\in\NN}.
\]
Pick an increasing sequence $(n_j)_{j=1}^\infty$ such that $\mu(B_j\setminus \Omega_n)<\infty$ for all $j$, $n\in\NN$ with $n>n_j$. 
Since for each $n\in\NN$
\[
\Nt_n=\{j \in\NN \colon n_j <n\}
\]
is finite, $E_n:= \cup_{j\in\Nt_n} (B_j\cap \Omega_n)$ is a finite-measure set. Hence, there is $D_n\in\Sigma$ with $D_n\subseteq \Omega_n$, $1\le \mu(D_n)<\infty$ and 
\[
D_n\cap E_n= D_n \cap \enpar{\cup_{j\in\Nt_n} B_j}  =\emptyset.
\]
We have $D:=\cup_{n=1}^\infty D_n\in\Ft$ and, for all $j\in\NN$,
\[
\mu\enpar{D\setminus B_j}=\sum_{n=1}^\infty \mu\enpar{D_n \setminus B_j}\ge \sum_{n=1+n_j}^\infty \mu\enpar{D_n \setminus B_j}
=\sum_{n=1+n_j}^\infty \mu\enpar{D_n}.
\]
In conclusion, $\mu\enpar{D\setminus B_j}=\infty$. This absurdity proves that  $L_0(\mu,\Ft,\XX)$ is not metrizable.
\end{example}

\begin{example}
We propose a general method for building spaces of measurable functions. Given a $\sigma$-finite measure space $(\Omega,\Sigma,\mu)$, we denote by $L^+(\mu)$ the cone of $L(\mu)$ consisting of all measurable functions with values in $[0,\infty]$. A \emph{gauge} will be a map $\rho\colon L^+(\mu) \to [0,\infty]$ such that
\begin{enumerate}[label=(F.\arabic*),widest=6,series=fqn]
\item\label{FQN:M} if $f\le g$ $\mu$-a.e., then $\rho(f)\le \rho(g)$.
\end{enumerate}
Let $J$ be the interval consisting of all $t\in(0,\infty)$ for which there is a constant $C\in(0,\infty)$ such that
\begin{equation}\label{eq:pseuso:hom}
\rho(t f)\le C \rho(f), \quad f\in L^+(\mu).
\end{equation}
Given $t\in J$ we denote by $\Delta(t)$ the optimal constant $C\in(0,\infty)$ in \eqref{eq:pseuso:hom}, and we call the function $\Delta\colon J\to(0,\infty]$ the \emph{homogeneity function of $\rho$}. Clearly $(0,1]\subseteq J$ and $\Delta(t)\le 1$ for all $t\in(0,1]$. If $s$, $t\in J$, then $st\in J$, and $\Delta(st)\le \Delta(s)\Delta(t)$. Hence, either $J=(0,1]$ or $J=(0,\infty)$. A gauge whose homogeneity function is defined in $(0,\infty)$ is said to be \emph{pseudo-homogeneous at infinity}. In turn, if
\[
\lim_{t\to 0^+} \Delta(t)=0.
\]
we say that the gauge $\rho$ is \emph{pseudo-homogeneous at the origin}. Note that if $\rho$ is pseudo-homogeneous at the origin, then $\Delta(t)<1$ for all $t\in(0,1)$. The other way around, if there is $t\in(0,1)$ with $\Delta(t)<1$ then $\rho$ is pseudo-homogeneous at the origin. We call \emph{homogeneous} those gauges whose homogeneity function is the identity map on $(0,\infty)$. If $\rho$ is homogeneous, then
\[
\rho(t f) =t\rho(f), \quad t\in(0,\infty),\, f\in L^+(\mu).
\]

\medskip
A \emph{modular function norm} will be a gauge $\rho$ such that
\begin{enumerate}[label=(F.\arabic*),widest=6,series=fqn,resume]
\item\label{FQN:QSA} there are constants $\kt$ and $\rt$ such that
\[
\rho(f+g)\le\kt( \rho(\rt f)+\rho(\rt g)), \quad f,g\in L^+(\mu).
\]
\item\label{FQN:CM} for every $E\in\Sigma(\mu)$ there is $u_E\in(0,\infty)$ such that
\begin{itemize}
\item $\rho(u_E \chi_E)<\infty$, and
\item for every $\varepsilon>0$ there is $\delta>0$ such that $\mu(A)\le \varepsilon$ whenever $A\in\Sigma$ satisfies $A\subseteq E$ and $\rho(\chi_A)\le \delta$;
\end{itemize}
\item\label{FQN:Abs} $\lim_{t\to 0^+} \rho(tf)=0$ for all $f\in L^+(\mu)$ with $\rho(f)<\infty$; and
\item\label{FQN:RFatou} there is a constant $\ft\in[1,\infty)$ such that
\[
\rho(\lim_n f_n)\le \ft \lim_n \rho(f_n)
\]
for all non-decreasing sequences $(f_n)_{n=1}^\infty$ in $L_0^+(\mu)$.
\end{enumerate}
Condition~\ref{FQN:RFatou} is a Fatou-type property called \emph{rough Fatou property} (see \cite{AnsorenaBello2022}). We say that $\ft$ is a \emph{Fatou constant} for $\rho$ and if $\ft=1$ then $\rho$ has the \emph{Fatou property.} We call the pair $(\kt,\rt)$ in \ref{FQN:QSA} a \emph{convexity pair} for $\rho$. We can choose a convexity pair $(\kt,\rt)$ with $\rt=1$ if and only if $\rho$ is pseudo-homogeneous at infinity.
A generalization of the Aoki--Rolewicz Theorem (see \cite{AnsorenaBello2022}*{Proposition 1.1}), yields that if $(\kt,1)$ is an convexity pair for $\rho$ and $p\in(0,1]$ is given by  $2^{1/p-1}=\kt$, then there is a constant $C$ such that
\begin{equation}\label{eq:pAdd}  
\frac{1}{C}\, \rho\enpar{\sum_{j=1}^n f_j} \le M_{\rho} (\ff;p):=\enpar{\sum_{j=1}^n \rho^p(f_j)}^{1/p}
\end{equation}
for all $n\in\NN$ and all $\ff=(f_j)_{j=1}^n$ in $L^+(\mu)$. Consequently,  for any homogeneous gauge $\rho_0$ satisfying \ref{FQN:QSA} there is an homogeneous gauge $\rho$, a constant $C\in[0,\infty)$ and $0<p\le 1$ such that 
\[
\rho\le \rho_0 \le C\rho
\]
and 
\begin{equation*}
\rho\enpar{f + g}  \le  \enpar{\rho^p(f)+\rho^p(g)}^{1/p}, \quad f,g\in L^+(\mu).
\end{equation*}
Furthermore, if $\rho_0$ is a modular function norm, so is $\rho$.

Any pseudo-homogeneous gauge $\rho$ at the origin satisfies \ref{FQN:Abs}. Another condition ensuring this property is
\begin{enumerate}[label=(H)]
\item\label{FQN:AC} $\lim_n \rho(f_n)=0$ for all non-increasing sequences $(f_n)_{n=1}^\infty$ in $L^+(\mu)$ with $\lim_n f_n=0$.
\end{enumerate}
Gauges satisfying \ref{FQN:AC} are called \emph{absolutely continuous}.

Given $0<p<\infty$, we say that $\rho$ is \emph{lattice $p$-convex} with constant $C\in[1,\infty)$ if 
\[
\frac{1}{C} \, \rho\enpar{\enpar{\sum_{j=1}^n s_j^p f_j^p}^{1/p}} \le  M_\rho(\ff):= \max_{1\le j \le n} \rho(f_j)
\]
for all $n\in\NN$, all $(s_j)_{j=1}^n$ in $[0,\infty)$ with $\sum_{j=1}^n s_j^p\le 1$ and all $\ff=(f_j)_{j=1}^n$ in $L^+(\mu)$. If, for $n$,  $(s_j)_{j=1}^n$ and $\ff=(f_j)_{j=1}^n$ running over the same set,
\[
C\, \rho\enpar{\enpar{\sum_{j=1}^n s_j^p f_j^p}^{1/p}} \ge  m_\rho(\ff):= \min_{1\le j \le n} \rho(f_j),
\]
we say that $\rho$ is \emph{lattice $p$-concave} with constant $C$. Note that $\rho$ is lattice $p$-convex (resp., lattice $p$-concave) with constant $C$ if and only if the gauge
\[
f\mapsto \rho(f^{1/p}), \quad f\in L^+(\mu).
\]
is lattice $1$-convex (resp., $1$-concave) with constant $C$.

Given $f\in L^+(\mu)$, let $\Ft_p(f)$ be the set all $(f_j)_{j=1}^n$ in $L^+(\mu)$ such that  $f^p = \sum_{j=1}^n s_j^p f_j^p$ for some $(s_j)_{j=1}^n$ in $[0,\infty)$ with $\sum_{j=1}^n s_j^p=1$.  If $\rho$ is lattice $p$-convex with constant $C$, then the gauge $\rho_0$ given by
\[
\rho_0(f)= \inf\enbrace{ M_\rho(\ff) \colon \ff \in \Ft_p(f)  }, \quad f\in L^+(\mu),
\]
satisfies $\rho_0\le \rho\le  C \rho_0$ and is lattice $p$-convex with constant one. Similarly, if $\rho$ is lattice $p$-concave with constant $C$, then
the gauge $\rho_0$ given by
\[
\rho_0(f)= \sup\enbrace{ m_\rho(\ff) \colon \ff \in \Ft_p(f)  }, \quad f\in L^+(\mu),
\]
satisfies  $\rho_0\ge \rho \ge  C \rho_0$ and is $p$-concave with constant one. In both cases, if $\rho$ is homogeneous, so is $\rho_0$. If $\rho$ is homogeneous, then $\rho$ is lattice $p$-convex (resp., $p$-concave) with constant $C$ if and only if
\[
N_{\rho}(\ff;p) := \rho\enpar{\enpar{\sum_{j=1}^n  f_j^p}^{1/p}}\le C M_{\rho}(\ff;p)
\]
(resp., $M_{\rho}(\ff;p)\le C N_{\rho}(\ff;p)$) for all $n\in\NN$ and all $\ff=(f_j)_{j=1}^n$ in $L^+(\mu)$.

Note that, for a homogeneous gauge, lattice $1$-convexity is the same condition as that in \eqref{eq:pAdd}, while, if $0<p<1$,  lattice $p$-convexity is stronger than \eqref{eq:pAdd}.

A \emph{function quasi-norm} will be a homogeneous modular function norm. A \emph{function norm} will be a function quasi-norm for which $(1,1)$ is a convexity pair. When dealing with function norms, it is customary to replace \ref{FQN:CM} with the existence of $(C_E)_{E\in\Sigma(\mu)}$ in $(0,\infty)$ such that
\[
\rho(\chi_E)<\infty \,\, \text{and} \int_E f \, d\mu \le C_E \rho(f)
\]
for all $ E\in\Sigma(f)$ and $ f\in L^+(\mu)$ (see \cite{BennettSharpley1988}). Condition~\ref{FQN:CM} is a weaker continuity condition proposed by the authors of \cite{AnsorenaBello2022} to fit the particularities of locally non-convex spaces. This condition implies $\rho(u_E\chi_E)>0$ for all $E\in\Sigma(\mu)$ with $\mu(E)>0$. Indeed, if $0<\varepsilon<\mu(E)$ and $\delta$ is as in \ref{FQN:CM}, then $\rho(u_E\chi_E)>\delta$. We claim that any modular function norm $\rho$ over $(\Omega,\Sigma,\mu)$ has the following properties.
\begin{enumerate}[label=(G.\arabic*),widest=2]
\item\label{FQN:Z} $\rho(0)=0$.
\item\label{it:PB}  If $f>0$ $\mu$-a.e., then there is $0<t<\infty$ such that $\rho(tf)>0$. 
\item\label{it:PC} If $(f_n)_{n=1}^\infty$ is a sequence in $L^+(\mu)$ with $f_n(\omega)<\infty$ for all $n\in\NN$ and $\omega\in\Omega$, and
\[
\lim_{k} \rho\enpar{\sum_{n=k}^\infty f_n}=0,
\]
then $\sum_{n=1}^\infty f_n<\infty$ a.e.
\end{enumerate}
For every $E\in\Sigma(\mu)$, define $u_E$ as the constant given by \ref{FQN:CM}. \ref{FQN:Z} follows from combining \ref{FQN:CM} with $E=\emptyset$ and \ref{FQN:Abs}. To show \ref{it:PB}, we choose $E\in\Sigma(\mu)$ and $t\in(0,u_E]$ such that $ t \chi_E\leq f$ and $\mu(E)>0$. We have
\[
\rho\enpar{\frac{u_E f}{t}} \ge \rho(u_E \chi_E) >0.
\]
Finally, we set $E=\enbrace{ \omega\in\Omega \colon \sum_{n=1}^\infty f_n=\infty}$. Since
\[
u_E\chi_E\le \sum_{n=k}^\infty f_n, \quad k\in \NN,
\]
$\rho(u_E\chi_E)=0$. Therefore, $\mu(E)=0$ and \ref{it:PC} follows.

\medskip
We go on by studying two kinds of gauges associated with modular function norms. Firstly, given a modular function norm $\rho$, we consider the Luxemburg map
\[
\tilde{\rho}(f) =\inf\enbrace{t\in(0,\infty) \colon \rho(f/t) < 1},\quad f\in L^+(\mu).
\]
Note that this map is the Minkowski functional associated with the set $\enbrace{f\in L^+ \tq \rho(f)<1}$.
The maps $\rho$ and $\tilde{\rho}$ are related as follows.
\begin{enumerate}[label=(A.\arabic*),widest=3,series=a]
\item $\rho(f) < \Delta(t)$ provided that $\tilde{\rho}(f)<t$. 
\item $\tilde{\rho}(f)<t$ provided that $\rho(f)<1/\Delta(1/t)$.
\end{enumerate}
Consequently,
\begin{enumerate}[label=(A.\arabic*),widest=3,series=a,resume]
\item\label{it:IneqC} assuming that $\rho$ is pseudo-homogeneous at the origin, for every $t>0$ there is $s=s(t)>0$ such that $\tilde{\rho}(f)<t$ whenever $\rho(f)<s$, and, the other way around,
$\rho(f)<t$ whenever $\tilde{\rho}(f)<s$.
\end{enumerate}

We claim that if the modular function norm $\rho$ is pseudo-homogeneous at the origin, then $\tilde{\rho}$ is a function quasi-norm. Indeed, the mere definition gives the homogeneity of $\tilde{\rho}$. $\tilde{\rho}$ plainly inherits \ref{FQN:M} from $\rho$. By \ref{it:IneqC}, $\tilde{\rho}$ inherits \ref{FQN:CM} and \ref{FQN:Abs} as well. Let $(\kt,\rt)$ be a convexity pair for $\rho$. Pick $\tau\in(0,1)$ with $\Delta(\tau)\le 1/(2\kt)$. Let $f_1$, $f_2\in L^+(\mu)$. If $t>M:=\max\{\tilde{\rho}(\rt f_1) , \tilde{\rho}(\rt f_2)\}$, then
\[
\rho\enpar{ \tau \frac{f_1+f_2}{ t } } \le \frac{1}{2\kt} \rho\enpar{\frac{f_1}{t} + \frac{f_2}{t} }\le \frac{1}{2} \enpar{\rho\enpar{\frac{\rt f_1}{t}} + \rho\enpar{\frac{\rt f_2}{t} } }<1.
\]
Hence, $\tilde{\rho}(f_1+f_2) \le M/\tau$. Therefore, $(1/\tau,\rt)$ is a convexity pair for $\tilde{\rho}$. Let $\lambda\in(0,\infty)$ be such that $\Delta(1/\lambda)<1/\ft$ where $\ft$ is a Fatou constant for $\rho$. Let $(f_n)_{n=1}^\infty$ be a non-decreasing sequence in $L^+(\mu)$ that converges to $f$. Pick $t>\lim_n \tilde{\rho}(f_n)$. Since $\rho(f_n/t)<1$ for all $n\in\NN$, $\rho(f/t)\le \ft$. Hence, $\rho(f/(\lambda t))<1$ and thus, $\tilde{\rho}(f) \le \lambda \lim_n \tilde{\rho}(f_n)$. So, $\lambda$ is a Fatou constant for $\tilde{\rho}$. 

If $\rho$ is lattice $p$-convex or lattice $p$-concave for some $0<p\le 1$, so is $\tilde{\rho}$.

In conclusion, the Luxemburg gauge allows us to build a function quasi-norm from a modular function norm that is pseudo-homogeneous at the origin. However, this construction is not useful when considering modular function norms that are not pseudo-homogeneous at the origin. Our second construction fits any modular function norm and yields a modular function norm pseudo-homogeneous at infinity. Following \cite{MusielakOrlicz1959}, we build from a modular function norm $\rho$ the gauge $\overline{\rho}$ given by
\[
\overline{\rho}(f)=\inf\enbrace{ t\in (0,\infty) \colon \rho(f/t)<t }, \quad f\in L^+(\mu).
\]
We have the following facts.
\begin{enumerate}[label=(B.\arabic*),widest=3,series=b]
\item\label{it:IneqE} $\max\{\rho(f),\tilde{\rho}(f)\} \le \overline{\rho}(f)$ provided that $\overline{\rho}(f)<1$; and
\item\label{it:IneqD} $ \overline{\rho}(f)<t$ provided that $\rho(f)< t/\Delta(1/t)$.
\end{enumerate}
Let us prove that $\overline{\rho}$ is a modular function norm pseudo-homogeneous at infinity. Clearly, $\overline{\rho}$ inherits \ref{FQN:M} from $\rho$. By \ref{it:IneqE} and by \ref{it:IneqD}, $\overline{\rho}$ inherits \ref{FQN:CM} and \ref{FQN:Abs} as well. Let $(\kt,\rt)$ be a convexity pair for $\rho$. Set $\lambda=\max\{2\kt/\rt,1\}$.

Let $f_1$, $f_2\in L^+(g)$. If $t>M:=\rt \max\{\overline{\rho}(f_1) , \overline{\rho}(f_2)\}$, then
\[
\rho\enpar{ \frac{f_1+f_2}{ \lambda t } } \le \rho\enpar{\frac{f_1+f_2}{t} }\le \kt \enpar{\rho\enpar{\frac{\rt f_1}{t}} + \rho\enpar{\frac{\rt f_2}{t} } }< \frac{2\kt t}{\rt}\le \lambda t.
\]
Consequently, $\overline{\rho}(f_1+f_2)\le \lambda t$. Hence, $\overline{\rho}(f_1+f_2)\le \lambda M$. So, $(\lambda,1)$ is a convexity pair for $\overline{\rho}$.

Let $(f_n)_{n=1}^\infty$ be a non-decreasing sequence in $L^+(\mu)$ and $F$ be a Fatou constant for $\rho$. Set $f=\lim_n f_n$. Pick $t>s>\lim_n \overline{\rho}(f_n)$. Since $\rho(f_n/s)<s$ for all $n\in\NN$, $\rho(f/s)\le \ft s$. Consequently, $\rho(f/(\ft t))<\ft t$. Therefore, $\overline{\rho}(f) \le \ft t$. Hence, $\ft$ is a Fatou constant for $\overline{\rho}$. 

 As well as $\widetilde{\rho}$, the gauge $\overline{\rho}$ inherits  lattice convexity and lattice concavity from $\rho$.

\medskip
We now construct vector-valued spaces of measurable functions from modular function norms. Let $\XX$ be a quasi-Banach space endowed with a continuous quasi-norm $\norm{\cdot}$. The balls associated with $\rho$ and $\XX$ are the sets
\begin{equation*}
B_{\rho,\XX}(\varepsilon)=\enbrace{ f\in L(\mu,\XX) \colon \rho(\norm{f})<\varepsilon}, \quad \varepsilon>0.
\end{equation*}
We shall show the following.
\begin{enumerate}[label=(\roman*)]
\item $\enbrace{ u B_{\rho,\XX}(\varepsilon) \colon u, \varepsilon\in(0,\infty) }$ is a neighbourhood basis at the origin for a complete vector topology on
\[
L_\rho(\XX)=\enbrace{f\in L(\mu,\XX) \colon \rho(u\norm{f}) <\infty \mbox{ for some } u>0}.
\]
\item $L_\rho(\XX)=L_{\overline{\rho}}(\XX)$ with the same topology.
\item\label{eq:CEVTPC} In the case when $\rho$ is pseudo-homogeneous at infinity
\[
L_\rho(\XX)=\enbrace{f\in L(\mu,\XX) \colon \rho(\norm{f}) <\infty}
\]
and $\Bt_{\rho,\XX}:=\enbrace{B_{\rho,\XX}(\varepsilon) \colon \varepsilon\in(0,\infty)}$ is a neighbourhood basis at the origin for the topology on $L_\rho(\XX)$.
\item In the case when $\rho$ is pseudo-homogeneous at the origin $L_\rho(\XX)=L_{\tilde{\rho}}(\XX)$ with the same topology, and $L_{\tilde{\rho}}(\XX)$  endowed with the quasi-norm $\norm{\cdot}_0=\tilde{\rho}(\norm{\cdot})$ is a quasi-Banach space.
\item \label{eq:pconv}
Let $p\in(0,1]$. If $\rho$ is lattice $p$-convex and $\XX$ is a locally $p$-convex,  then $L_\rho(\XX)$ is locally $p$-convex.
\end{enumerate}

Note that any modular function norm $\rho$ satisfies
\begin{equation*}
B_{\overline{\rho},\XX}(\min\{u,\varepsilon\}) \subseteq u B_{\rho,\XX}(\varepsilon), \quad \varepsilon B_{\rho,\XX}(\varepsilon) \subseteq B_{\overline{\rho},\XX}(\varepsilon), \quad u, \varepsilon>0.
\end{equation*}
Besides, if the modular function norm $\rho$ is pseudo-homogeneous at infinity,
\[
B_{\rho,\XX}(\Delta(1/u) \varepsilon) \subseteq u B_{\rho,\XX}(\varepsilon),\quad u,\, \varepsilon>0.
\]
Finally, if the modular function norm $\rho$ is pseudo-homogeneous at the origin \ref{it:IneqC} gives $\tau\colon(0,\infty)\to(0,\infty)$ such that
\[
B_{\rho,\XX}(\tau(\varepsilon)) \subseteq B_{\tilde{\rho},\XX}(\varepsilon), \quad B_{\tilde{\rho},\XX}(\tau(\varepsilon)) \subseteq B_{\rho,\XX}(\varepsilon), \quad \varepsilon>0. 
\]
Hence, since proving \ref{eq:pconv} is a routine checking,
it suffices to prove the latter assertion in \ref{eq:CEVTPC}. To that end, we choose $\kt\in[1,\infty)$ such that $(\kt,1)$ is a convexity pair for $\rho$. Let $\kt_0$ be the modulus of concavity of $\norm{\cdot}$. We have
\begin{itemize}
\item $(B_{\rho,\XX}(\varepsilon))_{\varepsilon>0}$ increases as $\varepsilon$ increases;
\item if $\varepsilon>0$, $g\in B_{\rho,\XX}(\varepsilon)$, and $f\in L(\mu,\XX)$ satisfies $\norm{f}\le\norm{g}$, then $f\in B_{\rho,\XX}(\varepsilon)$; and
\item if $\varepsilon_1$, $\varepsilon_2\in(0,\infty)$, then
\[
B_{\rho,\XX}(\varepsilon_1)+ B_{\rho,\XX}(\varepsilon_2) \subseteq B_{\rho,\XX}(\kt\Delta(\kt_0) (\varepsilon_1 + \varepsilon_2)).
\]
\end{itemize}
These properties readily give conditions \ref{th:localbasis:1} and \ref{th:localbasis:2} and \ref{th:localbasis:3} in Theorem~\ref{th:localbasis}. In turn \ref{th:localbasis:5} follows from \ref{FQN:Abs}, and \ref{th:localbasis:4} follows from \ref{it:PB}.

Now, we will use Lemma~\ref{lem:SNb} to prove that $L_\rho(\XX)$ is complete. Pick $(t_n)_{n=1}^\infty$ in $(0,\infty)$ such that $\sum_{n=1}^\infty t_n<\infty$. Set
\[
\delta_n=\sum_{k=n}^\infty t_k, \quad \varepsilon_n= \kt^{-n} \enpar{\Delta(\kt_0^n)}^{-1} t_n,
\quad V_n=B_{\rho,\XX}(\varepsilon_n), \quad n\in \NN.
\]
Pick a sequence $(f_n)_{n=1}^\infty$ in $L_\rho(\XX)$ satisfying $f_n\in V_n$ for all $n\in\NN$. Then, for any $k$, $m\in\NN$ with $k\le m$,
\[
\rho\enpar{\sum_{n=k}^m \kt_0^n \norm{f_n}} \le \sum_{n=k}^m \kt^n \rho\enpar{\kt_0^n \norm{f_n}}
\le \sum_{n=k}^m \kt^n \Delta(\kt_0^n) \rho\enpar{\norm{f_n}} \le \delta_k.
\]
Consequently, for any $k\in\NN$,
\[
\rho\enpar{\sum_{n=k}^\infty \kt_0^n \norm{f_n} } \le \ft \delta_k, 
\]
where $\Ft$ is a Fatou constant for $\rho$.

By \ref{it:PC}, $\sum_{n=1}^\infty \kt_0^n \norm{f_n} <\infty$ outside a null set $A$. Since
\[
\norm{\sum_{n=k}^m f_n} \le \sum_{n=k}^m \kt_0^n \norm{f_n}, \quad k,m\in\NN, \, k \le m,
\]
$\sum_{n=1}^\infty f_n(\omega)$ is a Cauchy series for all $\omega\in\Omega\setminus A$. Define $f(\omega)=\sum_{n=1}^\infty f_n(\omega)$ if $\omega\in\Omega\setminus A$ and $f(\omega)=0$ otherwise. We have
\[
\rho\enpar{\norm{f - \sum_{n=1}^{k-1} f_n}} \le \ft \delta_k, \quad k\in\NN.
\]
We infer that $\sum_{n=1}^\infty f_n$ converges to $f$.

\medskip
We close our approach to vector-valued spaces of measurable functions by proving that for every modular function norm $\rho$ and every quasi-Banach space $\XX$ 
\[L_\rho(\XX) \subseteq L_{0,f}(\mu,\XX)
\]
continuously. Given $E\in\Sigma(\mu)$, $t$, $\delta>0$, let $V_{E,\delta,t}$ be the neighbourhood at the origin for $L_0(\mu,\XX)$ defined as in \eqref{eq:NeiL0}. There is $\varepsilon>0$ such that $A\subseteq E$ and $\rho( u_E\chi_A)<\varepsilon$ implies $\mu(A)<\delta$. Let $f\in L(\mu,\XX)$ be such that $ u_E f/ t \in B_{\rho,\XX}(\varepsilon)$ and set
\[
A=E \cap \enbrace{\omega\in\Omega \colon \norm{f}>t}.
\]
We have $\rho(u_E \chi_A)\le \rho(u_E\norm{f}/t)<\varepsilon$. Consequently, $\mu(A)<\delta$. This proves that 
\[
\frac{t}{u_E} B_{\rho,\XX}(\varepsilon)\subseteq V_{E,\delta,t}.
\]
\medskip

Let us record some instances of modular function norms.  An \emph{Orlicz function} will be a non-decreasing left-continuous function
\[
F\colon[0,\infty)\to[0,\infty]
\]
such that $\lim_{t\to 0^+} F(t)=0$ and $F(\infty):=\lim_{\to\infty} F(t)>0$.
Given $t\in[0,\infty)$, a $\sigma$-finite measure space $(\Omega,\Sigma,\mu)$, and
\[
M\colon\Omega\times [0,\infty)\to [0,\infty],
\]
we denote by $\nu_M(\cdot,t)$ is the measure on $(\Omega,\Sigma,\mu)$ given by
\[
\nu_M(A,t)=\int_A M(\omega,t)\, d\mu(\omega), \quad A\in\Sigma.
\]
We say that $M$ is a Musielak-Orlicz function if
\begin{itemize}
\item $M(\omega,\cdot)$ is an Orlicz function for all $\omega\in\Omega$, and
\item for every $E\in\Sigma(\mu)$ there exists $u_E\in(0,\infty)$ such that
\[
\nu_M(E,u_E)<\infty
\]
\end{itemize}
and $M(\omega,u_E)>0$ for all $\omega\in E$ (cf.\@ \cite{Musielak1983}*{Chapter~7}).

If $f\colon\Omega\to[0,\infty)$ is a simple measurable function, then $M(\cdot,f(\cdot))$ is a measurable function. By left-continuity, $M(\cdot,f(\cdot))$ is measurable for all $f\in L^+(\mu)$. Hence, associated with the Musielak-Orlicz function $M$ there is a gauge 
\[
\rho_M\colon L^+(\mu)\to[0,\infty], \quad f\mapsto \int_\Omega M(\omega,f(\omega))\, d\mu(\omega).
\]

The monotonicity of the integral yields $\rho_M$ to fulfill \ref{FQN:M}. By the monotone convergence theorem, $\rho_M$ has the Fatou property and by the dominated convergence theorem, it is absolutely continuous. Since
\[
M(\omega,s+t)\le M(\omega, 2\max\{s,t\})=\max\enbrace{M(\omega,2s), M(\omega,2t)}
\]
for all $\omega\in\Omega$ and $s$, $t\in[0,\infty)$, $(1,2)$ is a convexity pair for $\rho_M$.

Let $\nu_{M,E}(\cdot,t)$ denote the restriction to $E\in\Sigma$ of $\nu_M(\cdot,t)$. Fix $E\in\Sigma(\mu)$. By assumption, $\rho_M(u_E\chi_E)<\infty$. Hence, $\nu_E:=\nu_{M,E}(\cdot,u_E)$ is a finite measure. Since $\mu|_E$ is absolutely continuous with respect to $\nu_{E}$, \ref{FQN:CM} holds.

Summing up, $\rho_M$ is a modular function norm. The complete metrizable topological vector space associated with $\rho_M$ and a quasi-Banach space $\XX$ is the vector-valued Musielak-Orlicz space $L_M(\XX)$ consisting of all $f\in L(\mu,\XX)$ such that
\[
\int_\Omega M(\omega,u\norm{f(\omega)})\, d\mu(\omega)<\infty
\]
for some $u>0$. The set $\{u B_M(\varepsilon) \colon u,\varepsilon>0\}$, where
\[
B_M(\varepsilon):=B_{\rho_M}(\varepsilon)=\enbrace{f\in L(\mu,\XX) \colon \int_\Omega M(\omega,\norm{f(\omega)})\, d\mu(\omega)<\varepsilon},
\]
is a neighbourhood basis at the origin for $L_M(\XX)$. We highlight two kinds of Musielak-Orllicz spaces built from functions satisfying special conditions.
\begin{itemize}
\item If $M$ is \emph{doubling}, that is there is $D\in(1,\infty)$ such that
\[
M(\omega,2s)\le D M(\omega,s), \quad (\omega,s)\in\Omega\times[0,\infty),
\]
then $\rho_M$ is pseudo-homogeneous at infinity.
Thus, 
\[
\enbrace{B_M(\varepsilon) \colon \varepsilon>0}
\]
is a a neighbourhood basis at the origin for $L_M(\XX)$, and
\[
L_M(\XX)=\enbrace{f\in L(\mu,\XX) \colon \int_\Omega M(\omega,\norm{f(\omega)})\, d\mu(\omega)<\infty}.
\]
\item If there are $c$, $d\in(0,1)$ such that
\begin{equation}\label{eq:PCO}
M(\omega,cs)\le d M(\omega,s), \quad (\omega,s)\in\Omega\times[0,\infty),
\end{equation}
then  $\rho_M$ is pseudo-homogeneous at the origin. This implies that $L_M(\XX)$ is a quasi-Banach space, and the map
\[
f \mapsto \inf\enbrace{ t>0 \colon \int_\Omega M\enpar{\omega,\frac{\norm{f(\omega)}}{t}} \, d\mu(\omega)<1}
\]
is a quasi-norm for $L_M(\XX)$. We point out that if there is $a\in(0,\infty)$ such that $M^{1/a}(\omega,\cdot)$ is convex for all $\omega\in\Omega$, then \eqref{eq:PCO} holds with any $0<c<1$ and $d=c^a$. Besides, $\rho_M$ is lattice $p$-convex with constant one for all $0<p\le a$. 
In turn, if  there is $b\in(0,\infty)$ such that $M^{1/b}(\omega,\cdot)$ is $q$-concave, then $p_M$ is lattice $q$-concave with constant one for all $q\ge b$.
\end{itemize}

\medskip
Let us prove that if $\mu$ is the counting measure on $\Omega$, and there is $u\in(0,\infty)$ such that 
\[
\inf_{\omega\in\Omega} M(\omega,u)>0,
\]

then the Musielak-Orlicz space $L_M(\mu,\XX)$ only depends on the behaviour of $M$ near the origin. To that end, we use that if $f\in B_{M}(\varepsilon)$, then
\[
M(\omega,\norm{f(\omega)})<\varepsilon
\]
for all $\omega\in\Omega$. Let $N$ be another Musielak-Orlicz function on the same measure space $(\Omega,\Sigma,\mu)$, and assume that there is $t_0\in(0,\infty)$ is such that $M(\omega,t)=N(\omega,t)$  for all $0<t\le t_0$. Pick $u_0\in[t_0,\infty)$ and $R_0\in(0,\infty)$ such that $M(\omega, u_0)\ge R_0$ for all $\omega\in \Omega$. If $0<\varepsilon \le R_0$, then 
\[
\frac{t_0}{u_0} B_M(\varepsilon)\subseteq B_N(\varepsilon).
\]
Indeed, if $f\in  B_M(\varepsilon) $, then $\norm{f(\omega)}\le u_0$ for all $\omega\in\Omega$. Therefore,
\[
\rho_N\enpar{ \frac{t_0}{u_0} \norm{f}} =\rho_M \enpar{ \frac{t_0}{u_0} \norm{f}} \le \rho_N\enpar{\norm{f}} <\varepsilon.
\]

Under certain assumptions on $M$, $L_M(\XX)$ only depends on the behaviour of $M$ near infinity. To be precise, if $M$ and $N$ are Musielak-Orlicz functions such that there is $t_0\in(0,\infty)$ for which $\nu_M(\cdot,t_0)$ is a finite measure, and $M(\omega,t)=N(\omega,t)$ for all $t\ge t_0$, then $L_M(\XX)=L_N(\XX)$. Indeed, given $0<u\le t_0$ and $f\in L(\mu,\XX)$,
\[
\rho_{N}\enpar{\norm{f}} \le \nu_N(\Omega,u) + \nu_N(\Omega_{f,u},t_0) +\rho_{M}\enpar{\norm{f}},
\]
where $\Omega_{f,u}$ is defined as in \eqref{eq:levelset}. A Chebyshev-type estimate gives
\[
\nu_M(\Omega_{f,u},u) \le \int_{\Omega_{f,u}} M\enpar{\omega, \norm{f(\omega)} } \, d\mu(\omega) \le \rho_M\enpar{\norm{f} }.
\]
Given $u$, $\varepsilon>0$, since $\nu_N(\cdot,t_0)$ is absolutely continuous with respect to $\nu_M(\cdot,u)$,  there is $\delta(\varepsilon,u)>0$ such that $\nu_N(A,t_0)<\varepsilon$ whenever $\nu_M(A,u)<\delta(\varepsilon,u)$. Summing up, if for a fixed $\varepsilon>0$ we pick $u\in(0,t_0]$ such that $\nu_N(\Omega,u)<\varepsilon/3$,
\[
B_M\enpar{\min\enbrace{\frac{\varepsilon}{3}, \delta\enpar{ \frac{\varepsilon}{3},u} } } \subseteq B_N(\varepsilon).
\]

The space $L_{0,f}(\mu,\XX)$ is a Musielak-Orlicz space. In fact, if $F$ is an Orlicz function with   
$F(\infty)=1$,
and
\[
\varphi\colon \Omega\to(0,\infty)
\]
is integrable and 
then $L_{0,f}(\mu,\XX)=L_M(\XX)$, where
\[
M(\omega,t)=\varphi(\omega) F(t), \quad   \omega\in \Omega, \, t\in[0,\infty),
\]
To prove this assertion, given $\varepsilon>0$ we choose $E\in\Sigma(\mu)$ with 
\[
\int_{\Omega\setminus E} \varphi \, d\mu<\frac{\varepsilon}{3},
\]
$t>0$ such that 
\[
F(t) \int_\Omega \varphi\, d\mu <\varepsilon/3,
\]
and $\delta>0$ such that $\int_B  \varphi \, d\mu<\varepsilon/3$ whenever $\mu(B)<\delta$. Since
\[
\rho_M(f)\le \int_{\Omega\setminus E} \varphi \, d\mu + \int_{\Omega_{f,t}\cap E}\,  \varphi \, d\mu +  F(t) \int_\Omega \varphi\, d\mu, \quad f\in L(\mu,\XX),
\]
$V_{E,\delta , t} \subseteq B_M(\varepsilon)$.

\medskip

Lebesgue spaces with variable exponents are Musielak-Orlicz spaces constructed from a measure space $(\Omega,\Sigma,\mu)$ and a measurable function $\pp\colon\Omega\to(0,\infty]$ via the Musielak-Orlicz function
\[
M(\omega,t)=t^{\pp(\omega)}, \quad \omega\in\Omega,\, t\in[0,\infty),
\]
with the convention that $t^\infty=\infty$ if $t>1$ and $t^\infty=0$ otherwise. Set $\Omega_{\pp}=\pp^{-1}((0,\infty))$. If there is $p>0$ such that $\pp(\omega)\geq p$ $\mu$-a.e.\@ $\omega\in\Omega$, then $\rho_M$ is lattice $p$-convex with constant one. Consequently, if $\XX$ is a $r$-Banach space, $0<r\le 1$, then $L_M(\XX)$ equipped with the quasi-norm
\[
f\mapsto \inf\enbrace{t>0 \colon \supess_{\omega\in\Omega\setminus\Omega_p}\norm{f(\omega)}<t, \, \int_{\Omega_p} \enpar{\frac{\norm{f(\omega)}}{t}}^{\pp(\omega)}\, d\mu(\omega)<1}.
\]
is a $\min\{r,p\}$-Banach space. In turn, if  there is $q<\infty$ such that $\pp(\omega)\leq q$ $\mu$-a.e.\@ $\omega\in\Omega$, then $\rho_M$ is lattice $q$-concave with constant one.
These spaces are being studied in depth when $\XX$ is a Banach space and $\pp(\omega)\ge 1$ for all $\omega\in\Omega$ so that the space 
\[L_{\pp}(\XX):=L_M(\XX)
\]
is a Banach space \cite{DHH2R017}. If $\mu$ is the counting measure, Lebesgue spaces of variable exponents are Bourgin spaces (see \cite{Bourgin1943}). If, besides, $\pp(\omega)\ge 1$ for all $\omega\in\Omega$, Lebesgue spaces of variable exponents are Nakano spaces (see \cite{Nakano1950}).

\medskip

We can regard Orlicz spaces as particular cases of Musielak-Orlicz spaces. Given a measure space $(\Omega,\Sigma,\mu)$, an Orlicz function $F$, and a quasi-Banach space $\XX$, the $\XX$-valued Orlicz space $L_F(\mu,\XX)$ is the Musielak-Orlicz space constructed from the Musielak-Orlicz function
\[
M_F\colon\Omega\times[0,\infty) \to [0,\infty), \quad M_F(\omega,t)=F(t).
\]
The associated gauge $\rho_{\mu,F}=\rho_{M_F}$ is given by
\[
\rho_{\mu,F}(f) =\int_\Omega F(f(\omega)) \, d\mu(\omega), \quad f\in L^+(\mu),
\]
the Orlicz space $L_F(\mu,\XX)$ is 
\[
\enbrace{f\in L(\mu,\XX) \colon \int_\Omega F\enpar{u\norm{f(\omega)}} \, d\mu(\omega)<\infty \mbox{ for some }u>0},
\]
and the balls of  $L_F(\mu,\XX)$ are the sets
\[
B_{\mu,F}(\varepsilon)=B_{M_F}(\varepsilon) =\enbrace{f\in L(\mu,\XX) \colon \int_\Omega \norm{f(\omega)} \, d\mu(\omega)<\varepsilon}, \quad \varepsilon>0.
\]
If $F$ is doubling, then $(B_{\mu,F}(\varepsilon))_{\varepsilon>0}$ is a neighbourhood basis of the origin, and 
\[
L_F(\mu,\XX)=\enbrace{f\in L(\mu,\XX) \colon \int_\Omega F\enpar{\norm{f(\omega)}} \, d\mu(\omega)<\infty}.
\]
If there are $c$, $d\in(0,1)$ such that $F(ct)\le d F(t)$ for all $t\in[0,\infty)$, then $L_F(\mu,\XX)$ is locally bounded, and
\[
f \mapsto \inf\enbrace{ t>0 \colon \int_\Omega F\enpar{\frac{\norm{f(\omega)}}{t}} \, d\mu(\omega)<1}
\]
is a quasi-norm for  $L_F(\mu,\XX)$. If $\mu$ is the counting measure, then $L_F(\mu,\XX)$ only depends on the behaviour of $F$ near the origin. In turn, if $\mu$ is finite, then $L_F(\mu,\XX)$ only depends on the behaviour of $F$ near infinity. If $\mu$ is finite and $F$ is bounded then $L_F(\mu,\XX)=L_{0}(\mu,\XX)$.

\end{example}
\bibliography{references}
\bibliographystyle{plain}
\end{document}